
\documentclass[10pt,oneside,leqno]{amsart}
\usepackage{amsxtra}
\usepackage{amsopn}
\usepackage{color}
\usepackage{amsmath,amsthm,amssymb}
\usepackage{amscd}
\usepackage{amsfonts}
\usepackage{latexsym}
\usepackage{verbatim}
\usepackage{multirow}
\usepackage[all]{xy}

\theoremstyle{plain}
\newtheorem{theorem}{Theorem}[section]

\newtheorem{lemma}[theorem]{Lemma}
\newtheorem{proposition}[theorem]{Proposition}
\newtheorem{corollary}[theorem]{Corollary}
\newtheorem{remark}[theorem]{Remark}

\newtheorem{remark-question}[section]{Remark-Question}


\newcommand\fra{{\mathfrak a}} 
\newcommand\frg{{\mathfrak g}}


\newcommand\nilm{\Gamma\backslash G}

\sloppy

\setlength{\oddsidemargin}{0.25in}

\setlength{\evensidemargin}{0.3in}

\setlength{\textwidth}{6.2in}

\setlength{\textheight}{8.1in}


\begin{document}
\title[]{
On the real homotopy type of generalized complex nilmanifolds
}

\keywords{Nilmanifold, nilpotent Lie algebra, complex structure, symplectic form,
generalized complex structure, homotopy theory, minimal model.}
\subjclass[2000]{Primary 53D18, 55P62; Secondary 17B30, 53C56, 53D05.}
%
%

\author{Adela Latorre}
\address[A. Latorre]{Departamento de Matem\'atica Aplicada,
Universidad Polit\'ecnica de Madrid,
C/ Jos\'e Antonio Novais 10,
28040 Madrid, Spain}
\email{adela.latorre@upm.es}

\author{Luis Ugarte}
\address[L. Ugarte]{Departamento de Matem\'aticas\,-\,I.U.M.A.\\
Universidad de Zaragoza\\
Campus Plaza San Francisco\\
50009 Zaragoza, Spain}
\email{ugarte@unizar.es}

\author{Raquel Villacampa}
\address[R. Villacampa]{Centro Universitario de la Defensa\,-\,I.U.M.A., Academia General
Mili\-tar, Crta. de Huesca s/n. 50090 Zaragoza, Spain}
\email{raquelvg@unizar.es}


\maketitle

\begin{abstract}
We prove that for any $n\geq 4$ there are infinitely many real homotopy types of $2n$-dimensional nilmanifolds admitting
generalized complex structures of every type $k$, for $0 \leq k \leq n$. This is in deep contrast to the
$6$-dimensional case.
\end{abstract}

\maketitle


\section{Introduction}\label{intro}

\noindent
Nilmanifolds constitute a well-known class of compact manifolds providing interesting explicit examples
of geometric structures with special properties. A nilmanifold is a compact quotient $N=\nilm$ of a 
connected and
simply connected nilpotent Lie group $G$
by a lattice $\Gamma$ of maximal rank in $G$. Hence, any left-invariant geometric structure on $G$ descends to $N$.
We will refer to such structures as \emph{invariant}.
For instance, there are nilmanifolds admitting invariant complex structures, as the Iwasawa manifold,
or invariant symplectic forms, as the Kodaira-Thurston manifold, with remarkable properties \cite{Thu,TO-libro}.
However, by~\cite{Has-PAMS} a nilmanifold cannot admit any K\"ahler metric (invariant or not), unless it is a torus.
Since there are also nilmanifolds with no invariant complex structures or symplectic forms,
it is an interesting problem to understand which nilmanifolds do admit such kinds of structures.

Symplectic and complex geometries constitute two special cases in the unified framework given by generalized complex geometry, introduced by Hitchin in \cite{Hitchin} and further developed by Gualtieri~\cite{Gualtieri}. 
In~\cite{CG-generalized}, Cavalcanti and Gualtieri study invariant generalized complex structures on nilmanifolds. 
Furthermore, Angella, Calamai and Kasuya show in \cite{ACK} that nilmanifolds provide a nice class for investigating 
cohomological aspects of generalized complex structures.

In \cite[Theorem 3.1]{CG-generalized} the authors prove 
that any invariant generalized complex structure on a $2n$-dimensional nilmanifold must be
\emph{generalized Calabi-Yau}, extending
a result of Salamon \cite{Salamon} for invariant complex structures. That means that any generalized complex structure is given by a (left-invariant) trivialization $\rho$ of the canonical bundle, i.e.
\begin{equation}\label{gcs-1}
\rho = e^{B+i\,\omega} \, \Omega,
\end{equation}
where $B,\omega$ are real invariant 2-forms and $\Omega$ is a globally decomposable complex
$k$-form, i.e. $\Omega= \theta^1\wedge\cdots\wedge\theta^k$,
with each $\theta^i$ invariant. Moreover, these data satisfy the non-degeneracy condition
\begin{equation}\label{gcs-2}
\omega^{n-k}\wedge \Omega\wedge\overline{\Omega} \not= 0,
\end{equation}
as well as the integrability condition
\begin{equation}\label{gcs-3}
d \rho =0.
\end{equation}
The integer $0 \leq k \leq n$ is called the \emph{type} of the generalized complex structure.
Type $k=n$ corresponds to the usual complex structures, whereas
the structures of type $k=0$ are the symplectic ones.
It is proved in \cite{CG-generalized}
that all the $6$-dimensional nilmanifolds admit a generalized complex structure of type $k$, for at least one $0 \leq k \leq 3$; however, it is shown that 
there are nilmanifolds in eight dimensions not admitting any invariant generalized complex structure.

Let $\frg$ be the nilpotent Lie algebra underlying the nilmanifold $N=\nilm$, and let $(\bigwedge\frg^*,d)$
be the Chevalley-Eilenberg complex seen as a commutative differential graded algebra (CDGA).
Hasegawa proved in~\cite{Has-PAMS} that this CDGA provides not only the $\mathbb{R}$-minimal model of $N$ 
but also its $\mathbb{Q}$-minimal model. A result by 
Bazzoni and Mu\~noz
asserts that, in six dimensions, there are infinitely many \emph{rational} homotopy types of nilmanifolds,
but only 34 different real homotopy types (see~\cite[Theorem~2]{BM2012}).
Hence, 
there only exits a finite number of real homotopy types of $6$-dimensional nilmanifolds admitting a generalized complex structure.  
In this paper we prove that the situation is completely different in eight dimensions:

\begin{theorem}\label{infinite-GCS}
There are infinitely many real homotopy types of $8$-dimensional nilmanifolds admitting
generalized complex structures of every type $k$, for $0 \leq k \leq 4$.
\end{theorem}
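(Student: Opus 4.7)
The strategy is to exhibit an explicit one-parameter family of $8$-dimensional nilpotent Lie algebras $\{\frg_t\}_{t\in S}$ with rational structure constants and then to establish, separately, two properties. By Mal'cev's theorem the rationality guarantees a lattice $\Gamma_t\subset G_t$, giving nilmanifolds $N_t=\Gamma_t\backslash G_t$, while Hasegawa's result identifies the real homotopy type of $N_t$ with the isomorphism class of $(\bigwedge\frg_t^*,d)$ over $\R$, that is, with the isomorphism class of $\frg_t$ itself. Since any left-invariant generalized complex structure is determined by data at the Lie algebra level, it suffices to (i) exhibit, for every $t$ and every $k\in\{0,1,2,3,4\}$, a trivialization $\rho_k^t=e^{B_k^t+i\omega_k^t}\,\Omega_k^t$ of the canonical bundle satisfying \eqref{gcs-1}--\eqref{gcs-3}, and (ii) show that the family $\{\frg_t\}$ represents infinitely many real isomorphism classes.

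For (i) the simplest sufficient device is to arrange that, for each $k$, the form $\Omega_k^t$ is closed and decomposable and the real $2$-forms $B_k^t,\omega_k^t$ are closed; the integrability \eqref{gcs-3} then holds automatically and only the non-degeneracy condition \eqref{gcs-2}, namely $(\omega_k^t)^{4-k}\wedge\Omega_k^t\wedge\overline{\Omega_k^t}\neq 0$, needs to be verified. This suggests looking for $\frg_t$ whose Chevalley--Eilenberg complex has many closed $2$-forms and a closed complex $(4,0)$-form compatible with an invariant complex structure (giving type $k=4$), a closed symplectic form (giving type $k=0$), and, for $k=1,2,3$, closed decomposable complex $k$-forms that can be coupled with suitable closed pairs $(B_k^t,\omega_k^t)$ satisfying the volume condition. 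A natural candidate family has an abelian (or nearly abelian) ``horizontal'' subspace spanned by forms $e^1,\dots,e^m$ from which all the $\Omega_k^t$ are built, while $t$ enters only in the structure equations of the remaining generators, leaving the $\Omega_k^t$ unaffected.

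For (ii) we need a real isomorphism invariant of $\frg_t$ that varies with $t$. Natural candidates are (a) the dimensions of the terms of the lower central series together with the ranks of the natural pairings between them, (b) the Jordan normal form over $\R$ of a canonically defined endomorphism of some cohomology space, or (c) a direct normalization argument classifying $\frg_t$ up to real change of basis and extracting $t$ as a continuous modulus. The main obstacle I anticipate is precisely the tension between (i) and (ii): rigidifying the structure equations enough to control non-degeneracy for all five types tends to collapse the continuous parameter, whereas allowing $t$ to propagate too widely through the equations risks spoiling \eqref{gcs-2} for some intermediate type $k$. The heart of the proof is therefore the choice of the family, together with the verification that, for an infinite subset of rational values of $t$, the non-degeneracy conditions hold simultaneously while a real invariant records $t$ faithfully.
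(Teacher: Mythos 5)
Your outline coincides with the paper's strategy -- an explicit one\--parameter family $\frg_\alpha$ of rational $8$-dimensional nilpotent Lie algebras, Mal'cev and Nomizu--Hasegawa to pass between nilmanifolds and Chevalley--Eilenberg complexes, then two separate verifications (existence of structures of all types; pairwise non-isomorphism) -- but as written it is a plan, not a proof: no family is exhibited, none of the five conditions \eqref{gcs-1}--\eqref{gcs-3} is verified for any $k$, and no invariant separating the members is actually produced. You yourself identify the heart of the matter ("the choice of the family, together with the verification") and then stop there, so the entire mathematical content is deferred. Two specific points where the plan as stated would need modification in any execution: (1) your "simplest sufficient device" of taking $B_k$, $\omega_k$, $\Omega_k$ all closed is what the paper uses for types $0$ and $1$, but for types $2$ and $3$ the paper's forms $B+i\omega$ (built from $\theta^3\wedge\theta^4$ and $\theta^4\wedge\bar\theta^4$) are \emph{not} closed; integrability holds only because $d(B+i\omega)\wedge\Omega=0$, a weaker condition you would have to allow for, since on these algebras the space of invariant closed $2$-forms (dimension $12$, Lemma~\ref{closed-2-forms}) need not contain a suitable $\omega$ for every intermediate type. (2) Your candidate invariants (a) and (b) fail for the paper's family: the Betti numbers, the dimensions of both central series, and the Lie algebra cohomology are all independent of $\alpha$, so the separation must be done by your option (c), a direct normalization of a hypothetical isomorphism $f\colon\frg_\alpha\to\frg_{\alpha'}$ -- in the paper this occupies all of Section~\ref{no-iso} and uses the compatibility of $f$ with the ascending central series and the derived algebra (Lemmas~\ref{lema-general} and~\ref{lema-F}) to force $\alpha=\alpha'$. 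Until a concrete family is written down and both verifications are carried out, the proposal does not establish the theorem.
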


It is worth remarking that the existence of infinitely many real homotopy types of $8$-dimensional nilmanifolds with complex structure (having special Hermitian metrics) is proved in~\cite{LUV-ComplexManifolds}. However, such nilmanifolds do not admit any symplectic form. 

\vskip.3cm

This paper is structured as follows. In Section~\ref{nilvariedades} we review some general results about minimal models and homotopy theory, and we define a family of nilmanifolds $N_{\alpha}$ in eight dimensions depending on a rational parameter $\alpha>0$. Section~\ref{GCS} is devoted to the construction of  generalized complex structures on the nilmanifolds $N_{\alpha}$.
More concretely, we prove the following:

\begin{proposition}\label{prop-1}
For each $\alpha\in {\mathbb{Q}}^+$, the $8$-dimensional nilmanifold $N_{\alpha}$ has generalized complex structures of every type $k$, for $0 \leq k \leq 4$.
\end{proposition}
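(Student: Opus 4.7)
The plan is to work entirely at the Chevalley--Eilenberg level of the nilpotent Lie algebra $\mathfrak{g}$ underlying $N_\alpha$, since by Cavalcanti--Gualtieri every invariant generalized complex structure on a nilmanifold is of the generalized Calabi--Yau form \eqref{gcs-1}, and by Hasegawa the invariant forms compute the real homotopy type. For each $k\in\{0,1,2,3,4\}$ I would exhibit explicit invariant real 2-forms $B,\omega$ and a decomposable invariant complex $k$-form $\Omega=\theta^1\wedge\cdots\wedge\theta^k$ for which the non-degeneracy $\omega^{4-k}\wedge\Omega\wedge\overline{\Omega}\neq 0$ and the closedness $d\rho=0$ both hold, with coefficients chosen uniformly in $\alpha$.

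The two extreme cases are essentially known once $N_\alpha$ is written down. For $k=0$ it suffices to produce an invariant symplectic form $\omega$ on $\mathfrak{g}^*$ (take $B=0$, $\Omega=1$); one writes $\omega$ as a closed 2-form pairing up the basis of invariant 1-forms and checks $\omega^4\neq 0$. For $k=4$ one needs an invariant complex structure $J$ with a closed complex volume form $\Omega=\theta^{1234}$; I would select a basis $\theta^1,\dots,\theta^4$ of $(1,0)$-forms compatible with the nilpotent filtration so that the structure equations force $d\Omega=0$, and non-degeneracy follows from $\Omega\wedge\overline{\Omega}$ being a nonzero multiple of the invariant volume.

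The intermediate types $k=1,2,3$ are the real work. Expanding \eqref{gcs-3} gives
\begin{equation*}
d\rho = d(B+i\omega)\wedge e^{B+i\omega}\wedge\Omega + e^{B+i\omega}\wedge d\Omega = 0,
\end{equation*}
which, after collecting by form degree, yields the coupled system $d\Omega=0$, $d(B+i\omega)\wedge\Omega=0$, and $(B+i\omega)\wedge d(B+i\omega)\wedge\Omega=0$, etc. A safe sufficient condition is to take $B$, $\omega$ and $\Omega$ individually closed, but this is typically incompatible with $\Omega$ being decomposable and with transversality. My strategy is to pick $\theta^1,\ldots,\theta^k$ among the closed generators at the bottom of the lower central series filtration of $\mathfrak{g}^*$ (where closedness and decomposability are automatic), and then to choose $B$ and $\omega$ in the complementary directions so that the remaining components of $d(B+i\omega)$ are divisible by $\Omega$ and the products $\omega^{4-k}\wedge\Omega\wedge\overline{\Omega}$ are nonzero multiples of the volume form. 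The main obstacle I anticipate is the simultaneous juggling of decomposability, transversality, and the integrability cascade for $k=2,3$: decomposability rules out the most natural closed $\Omega$'s that the Lie algebra suggests, and transversality forbids $\omega$ from lying in the span of the $\theta^i$'s and $\overline{\theta^j}$'s.

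Finally, because every condition is a polynomial identity in the coefficients of a fixed basis of invariant forms, once candidate $B$, $\omega$, $\Omega$ are written down the verification reduces to a finite wedge-product computation for each $k$. Since $\alpha$ enters the structure equations of $N_\alpha$ as a single rational parameter, one only needs to check that the chosen coefficients do not introduce any $\alpha$-dependent degeneracy, which should be immediate from the explicit formulae.
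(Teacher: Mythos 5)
Your overall strategy --- work with invariant forms, and for each $k$ exhibit explicit $B$, $\omega$, $\Omega$ satisfying $\omega^{4-k}\wedge\Omega\wedge\overline{\Omega}\neq 0$ and $d\rho=0$ --- is exactly the paper's approach, and your reduction of $d\rho=0$ is essentially right (in fact it collapses to just the two conditions $d\Omega=0$ and $d(B+i\omega)\wedge\Omega=0$; the higher-order terms in your ``cascade'' vanish automatically once these two hold). The problem is that the proposal stops at the plan: the proposition is an existence statement about a \emph{specific} family $\frg_\alpha$, and since there are $8$-dimensional nilmanifolds admitting no invariant generalized complex structure at all, nothing short of writing down the forms and checking them against the structure equations~\eqref{equations} constitutes a proof. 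None of the five constructions is actually carried out.

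Moreover, several of the concrete heuristics you propose would fail for this family. Picking the $\theta^i$ ``among the closed generators at the bottom of the filtration'' is impossible for $k=3,4$: there are only four closed generators $e^1,\dots,e^4$, and $\Omega\wedge\overline{\Omega}$ is a $2k$-form, so it vanishes identically whenever the $\theta^i$ lie in $\langle e^1,\dots,e^4\rangle\otimes\C$ and $k\geq 3$; non-degeneracy forces $\Omega$ to involve the non-closed generators. Relatedly, the complex structure of type $4$ that actually exists on $\frg_\alpha$ is \emph{strongly non-nilpotent} (the center is one-dimensional), so no ansatz ``compatible with the nilpotent filtration'' will find it --- the paper's $(1,0)$-forms~\eqref{complex} mix $e^3,e^5$ and $e^4,e^8$ with $\alpha$-dependent coefficients. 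Finally, for $k=2$ one can check from the $12$-dimensional space of invariant closed $2$-forms that no closed $\omega$ satisfies $\omega^2\wedge e^{1234}\neq 0$, so your ``safe sufficient condition'' of individually closed data is not merely inconvenient but unavailable; the paper instead obtains type $2$ by deforming the complex structure by a holomorphic Poisson bivector (taking $B+i\omega=\theta^3\wedge\theta^4$, which is not closed but satisfies $d(\theta^3\wedge\theta^4)\wedge\theta^1\wedge\theta^2=0$), and type $3$ by the analogous non-closed choice $\omega=i\,\theta^4\wedge\overline{\theta^4}$. These constructions are the substance of the proof, and they are missing.
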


In Section~\ref{no-iso} we study the real homotopy types of the nilmanifolds in the family $\{N_{\alpha}\}_{\alpha\in {\mathbb{Q}}^+}$. More precisely, the result below is attained:

\begin{proposition}\label{prop-2}
If $\alpha \not= \alpha'$, then the nilmanifolds $N_{\alpha}$ and $N_{\alpha'}$ have non-isomorphic $\mathbb{R}$-minimal models, so they have different real homotopy type.
\end{proposition}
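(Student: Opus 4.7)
The plan is to invoke Hasegawa's theorem, recalled in the introduction, which identifies the Chevalley-Eilenberg CDGA $(\bigwedge \frg_\alpha^*, d_\alpha)$ of the nilpotent Lie algebra $\frg_\alpha$ underlying $N_\alpha$ with the $\mathbb{R}$-minimal model of $N_\alpha$. Consequently, Proposition~\ref{prop-2} reduces to showing that the real CDGAs $(\bigwedge \frg_\alpha^*, d_\alpha)$ and $(\bigwedge \frg_{\alpha'}^*, d_{\alpha'})$, or equivalently the real Lie algebras $\frg_\alpha$ and $\frg_{\alpha'}$, are not isomorphic for $\alpha \neq \alpha'$ in $\mathbb{Q}^+$.

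First, I would observe that any CDGA isomorphism $F\colon (\bigwedge \frg_\alpha^*, d_\alpha) \to (\bigwedge \frg_{\alpha'}^*, d_{\alpha'})$ is determined by a linear isomorphism $F\colon \frg_\alpha^* \to \frg_{\alpha'}^*$ on degree one, and must preserve the canonical ascending filtration $V_1 \subseteq V_2 \subseteq \cdots \subseteq \frg_\alpha^*$ dual to the lower central series, defined by $V_1 = \ker d_\alpha$ and $V_i = \{\theta \in \frg_\alpha^* : d_\alpha \theta \in \bigwedge^2 V_{i-1}\}$. Choosing bases $e^1,\ldots,e^8$ of $\frg_\alpha^*$ and $f^1,\ldots,f^8$ of $\frg_{\alpha'}^*$ adapted to these filtrations, the matrix of $F$ is block upper-triangular with respect to the induced gradings, which sharply cuts down the number of free parameters.

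Second, I would impose the relation $F \circ d_\alpha = d_{\alpha'} \circ F$ generator by generator, working upward from the bottom of the filtration. At each level this yields a system of polynomial equations in the entries of $F$ and in the parameters $\alpha, \alpha'$. Many of these entries reflect the residual $\GL$-freedom on each graded piece and can be used to normalize away redundant unknowns; the remaining equations then couple the structure constants of the two Lie algebras.

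Third, after exhausting the normalizations, a scaling-invariant combination of the structure constants—typically a ratio or product surviving at the top of the filtration—must take the same value for $\frg_\alpha$ and $\frg_{\alpha'}$. Since this combination is some explicit function of $\alpha$ that is injective on $\mathbb{Q}^+$, the equality forces $\alpha = \alpha'$, contradicting the hypothesis. The main obstacle is the bookkeeping in the second step: one has to track carefully how the block-triangular freedom interacts with the several occurrences of $\alpha$ in $d_\alpha$, neither missing nor double-counting the available rescalings, and to identify a clean invariant quantity whose dependence on $\alpha$ can be read off without ambiguity.
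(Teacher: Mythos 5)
Your overall strategy coincides with the paper's: reduce via Nomizu--Hasegawa to showing that the real Lie algebras $\frg_{\alpha}$ and $\frg_{\alpha'}$ are non-isomorphic, constrain the degree-one part of a would-be isomorphism by canonical filtrations, and then impose compatibility with $d$ coefficient by coefficient. The filtration you choose (the one dual to the descending central series, giving blocks of sizes $4,1,2,1$) is slightly coarser than what the paper extracts; the paper additionally uses the ascending central series, which refines the first block into $\langle e^1,e^2\rangle\oplus\langle e^3,e^4\rangle$ and is what makes the subsequent computation tractable. (Your coarser filtration can recover this refinement from the single equation $F(e'^1)\wedge F(e'^2)=\lambda^5_5\,e^{12}$, but you would need to say so.)

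The genuine gap is that steps two and three are not carried out, and they are the entire content of the proof. You never exhibit the ``scaling-invariant combination of structure constants'' whose injectivity in $\alpha$ is supposed to finish the argument, and there is no a priori reason such a cleanly isolated invariant exists: all the standard invariants (Betti numbers, dimensions of both central series) are independent of $\alpha$, so nothing short of the full polynomial system will do. Moreover, that system does not immediately yield $\alpha=\alpha'$: it has a potential degenerate branch (in the paper's notation, $\lambda^1_1=0$) that must be excluded separately, and the exclusion uses the positivity of $\alpha$ and $\alpha'$ through the relation $\lambda^5_3(\alpha+\alpha'+4)=0$. Only after killing that branch does one reach $\lambda^5_3=0$, then $\lambda^2_2\lambda^5_5\bigl(1-\tfrac{1+\alpha'}{1+\alpha}\bigr)=0$ with $\lambda^2_2\lambda^5_5\neq0$, and hence $\alpha=\alpha'$. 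Without this case analysis and the final identity, the proposal is a plausible plan rather than a proof.
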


Note that Theorem~\ref{infinite-GCS} is a direct consequence of Propositions~\ref{prop-1} and~\ref{prop-2}. 
Moreover, by taking products with even dimensional tori the result holds in any dimension $2n\geq8$ and for every $0 \leq k \leq n$.
Furthermore, our result in Theorem~\ref{infinite-GCS} can be extended to the 
\emph{complex homotopy} setting (see Remark~\ref{C-minimal-model} 
for details). Since the nilmanifolds $N_{\alpha}$ 
cannot admit any K\"ahler metric~\cite{Has-PAMS}, one has the following

\begin{corollary}\label{cor-infinite-GCS}
Let $n\geq 4$. There are infinitely many complex homotopy types of $2n$-dimensional compact non-K\"ahler manifolds admitting
generalized complex structures 
of every type $k$, for $0 \leq k \leq n$.
\end{corollary}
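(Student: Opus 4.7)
The plan is to reduce Corollary~\ref{cor-infinite-GCS} to what is already in place, via three moves: (a) pass from dimension $8$ to arbitrary $2n\geq 8$ by taking products with a flat torus; (b) realise every type $0\leq k\leq n$ by combining GCS on the two factors; and (c) lift the separation of real homotopy types to \emph{complex} ones using Remark~\ref{C-minimal-model}. For each $n\geq 4$ and $\alpha\in\Q^+$ I set
\[
M_\alpha^{\,n} := N_\alpha \times T^{2(n-4)},
\]
a $2n$-dimensional compact nilmanifold (which equals $N_\alpha$ when $n=4$).

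For any $0\leq k\leq n$ I decompose $k=k'+k''$ with $0\leq k'\leq 4$ and $0\leq k''\leq n-4$. Proposition~\ref{prop-1} supplies an invariant GCS $\rho'=e^{B'+i\omega'}\Omega'$ of type $k'$ on $N_\alpha$, while on the torus $T^{2(n-4)}$ there is an invariant GCS $\rho''=e^{i\omega''}\Omega''$ of type $k''$ built from a linear complex structure of complex dimension $k''$ on a sub-torus and a linear symplectic form on the complementary sub-torus. The wedge $\rho:=\rho'\wedge\rho''$ is $d$-closed since both factors are closed and live on different factors of the product, and a short binomial expansion shows that in $(\omega'+\omega'')^{n-k}\wedge\Omega'\wedge\Omega''\wedge\overline{\Omega'\wedge\Omega''}$ only the $j=4-k'$ term has the right bidegree to survive, yielding the product of top forms on $N_\alpha$ and $T^{2(n-4)}$; this verifies both~\eqref{gcs-2} and~\eqref{gcs-3} for $\rho$. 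The non-K\"ahler claim is immediate: the underlying nilpotent Lie algebra of $M_\alpha^{\,n}$ has $\frg_\alpha$ as a direct summand and is hence non-abelian, so by Hasegawa~\cite{Has-PAMS} no K\"ahler metric exists on $M_\alpha^{\,n}$.

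The delicate point, and the main obstacle, is separating the complex homotopy types. By Remark~\ref{C-minimal-model} the $\C$-minimal models of $N_\alpha$ and $N_{\alpha'}$ are non-isomorphic whenever $\alpha\neq\alpha'$. Since minimal models commute with products and the torus has formal minimal model $(\bigwedge V,0)$ with $\dim V=2(n-4)$, one has
\[
\mathcal{M}_{M_\alpha^{\,n}}^{\,\C} \;\cong\; \mathcal{M}_{N_\alpha}^{\,\C}\otimes_{\C}\bigl(\bigwedge V,\,0\bigr).
\]
What remains is a cancellation statement: an isomorphism of $\mathcal{M}_{M_\alpha^{\,n}}^{\,\C}$ and $\mathcal{M}_{M_{\alpha'}^{\,n}}^{\,\C}$ must induce an isomorphism of the factors $\mathcal{M}_{N_\alpha}^{\,\C}\cong\mathcal{M}_{N_{\alpha'}}^{\,\C}$. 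Via Chevalley--Eilenberg duality this amounts to the corresponding cancellation for complex nilpotent Lie algebras, namely $\frg_\alpha\otimes\C\oplus\C^{2(n-4)}\cong\frg_{\alpha'}\otimes\C\oplus\C^{2(n-4)}$ implying $\frg_\alpha\otimes\C\cong\frg_{\alpha'}\otimes\C$. This can be achieved by recovering the abelian summand intrinsically, for example as a complement inside the centre of its intersection with the derived subalgebra, once it is verified from the presentation of $\frg_\alpha$ in Section~\ref{nilvariedades} that $\frg_\alpha$ has no nontrivial abelian direct summand. Granted this cancellation, Remark~\ref{C-minimal-model} yields pairwise distinct $\C$-minimal models of the $M_\alpha^{\,n}$ for different $\alpha$, and the corollary follows.
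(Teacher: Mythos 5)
Your proposal is correct and follows essentially the same route as the paper: take products $N_\alpha\times T^{2(n-4)}$, combine generalized complex structures on the factors to realize every type $0\leq k\leq n$, invoke non-formality of the non-abelian underlying Lie algebra for the non-K\"ahler claim, and use Remark~\ref{C-minimal-model} to separate the $\mathbb{C}$-minimal models. The only difference is one of detail: the paper leaves the torus-cancellation step implicit, whereas you (correctly) reduce it to recovering the maximal abelian direct summand, which works here because $Z(\frg_\alpha)=\langle e_8\rangle\subseteq[\frg_\alpha,\frg_\alpha]$, so $\frg_\alpha$ has no abelian direct factor.
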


\section{The family of nilmanifolds $N_{\alpha}$}\label{nilvariedades}

\noindent 
Let us start recalling some general results about homotopy theory and minimal models, with special attention to the class of nilmanifolds.
In \cite{Sullivan} Sullivan shows  that it is possible to associate a
minimal model to any nilpotent CW-complex $X$, i.e. a space $X$ whose
fundamental group $\pi_1(X)$ is a nilpotent group that acts in a nilpotent way on the
higher homotopy group $\pi_k(X)$ of $X$ for every $k > 1$. 
Recall that a minimal model is a commutative differential graded algebra, CDGA for short, $(\bigwedge V_X,d)$ defined over the rational numbers $\mathbb{Q}$ and
satisfying a certain minimality condition,
that encodes the rational homotopy type of~$X$~\cite{GM-libro}.

More generally, 
let $\mathbb{K}$ be the field $\mathbb{Q}$ or $\mathbb{R}$. 
A CDGA $(\bigwedge V,d)$ defined over
$\mathbb{K}$ is said to be minimal if the following conditions hold:

\vskip.2cm

\noindent\ \emph{(i)} $\bigwedge V$ is the free commutative algebra generated by the graded vector
space $V = \oplus V^l$;

\vskip.2cm

\noindent\ \emph{(ii)} there exists a basis $\{x_j\}_{j \in J}$, for some well-ordered index set $J$, such
that $\deg(x_k) \leq \deg(x_j)$ 

\quad for $k < j$, and each $dx_j$ is expressed in terms of the
preceding $x_k$ $(k < j)$.

\vskip.3cm

A $\mathbb{K}$-minimal model of a differentiable manifold $M$ is a minimal CDGA $(\bigwedge V,d)$ over~$\mathbb{K}$
together with a quasi-isomorphism $\phi$ from $(\bigwedge V,d)$ to the $\mathbb{K}$-de Rham complex of $M$,
i.e. a morphism~$\phi$ inducing an isomorphism in cohomology.
Here, the $\mathbb{K}$-de Rham complex of $M$ is the usual de Rham complex of differential forms $(\Omega^*(M),d)$ when $\mathbb{K}=\mathbb{R}$,
whereas for $\mathbb{K}=\mathbb{Q}$ one considers $\mathbb{Q}$-polynomial forms instead. 
Notice that the $\mathbb{K}$-minimal model is unique up to isomorphism, since ${\rm char}\, (\mathbb{K})=0$. 
By \cite{DGMS,Sullivan}, two nilpotent manifolds $M_1$ and $M_2$ have the same $\mathbb{K}$-homotopy type
if and only if their $\mathbb{K}$-minimal models are isomorphic.
It is clear that if 
$M_1$ and $M_2$ have different real homotopy type, then $M_1$ and $M_2$ also have different rational homotopy type.

Let $N$ be a nilmanifold,
i.e. $N=\nilm$ is a compact quotient of a connected and simply connected
nilpotent Lie group $G$ by a lattice $\Gamma$ of maximal rank.
For any nilmanifold $N$ one has $\pi_1(N)=\Gamma$, 
which is nilpotent, and $\pi_k(N)=0$ for every $k \geq 2$. Therefore, nilmanifolds are nilpotent spaces.

Let $n$ be the dimension of the nilmanifold $N=\nilm$, and let $\frg$ be the Lie algebra of $G$. It is well known that
the minimal model of $N$ is given by the Chevalley-Eilenberg complex $(\bigwedge\frg^*,d)$ of $\frg$. 
Recall that by \cite{Malcev}
the existence of a lattice $\Gamma$ of maximal rank in $G$ is equivalent to
the nilpotent Lie algebra~$\frg$ being rational, i.e. there exists a basis $\{e^1,\ldots,e^n\}$ for the dual $\frg^*$ such that
the structure constants are rational numbers.
Thus, the rational and the nilpotency conditions of the Lie algebra~$\frg$
allow to take
a basis $\{e^1,\ldots,e^n\}$ for $\frg^*$ satisfying
\begin{equation}\label{st-ecus}
de^1=de^2=0, \quad de^j=\sum_{i,k<j} a^j_{ik}\, e^i\wedge e^k \ \ \mbox{ for } j=3,\ldots, n,
\end{equation}
with structure constants $a^j_{ik} \in \mathbb{Q}$.

Therefore, $(\bigwedge\frg^*,d)$ is a CDGA satisfying both conditions \emph{(i)} and \emph{(ii)}
with ordered index set  $J=\{1,\ldots,n\}$
and $V=V^1=\langle x_1,\ldots,x_n \rangle= \sum_{j=1}^n \mathbb{Q}x_j $, where $x_j=e^j$ for $1 \leq j \leq n$.
That is to say,  the CDGA $(\bigwedge\frg^*,d)$ over $\mathbb{Q}$ is minimal, and 
it is determined by
\begin{equation}\label{Q-minimal}
\big(\bigwedge\langle x_1,\ldots,x_n\rangle,\,d\,\big)
\end{equation}
with $n$ generators $x_1,\ldots,x_n$ of degree $1$ satisfying equations of the form (\ref{st-ecus}).
Notice that the CDGA $(\bigwedge\frg^*,d)$ over $\mathbb{R}$ is also minimal, since
it is given by
\begin{equation}\label{R-minimal}
\big(\bigwedge\langle x_1,\ldots,x_n\rangle \otimes \mathbb{R},\,d\,\big).
\end{equation}

There is a canonical morphism $\phi$ from the Chevalley-Eilenberg complex $(\bigwedge\frg^*,d)$
to the de Rham complex $(\Omega^*(\nilm),d)$ of the nilmanifold. Nomizu proves in \cite{Nomizu} that $\phi$ induces an isomorphism in cohomology, so the $\mathbb{R}$-minimal model of the nilmanifold
$N=\nilm$ is given by \eqref{R-minimal}. 
Hasegawa observes in~\cite{Has-PAMS} that \eqref{Q-minimal} is the $\mathbb{Q}$-minimal model of $N$ and that, conversely, given a $\mathbb{Q}$-minimal CDGA of the form \eqref{Q-minimal}, there exists a nilmanifold $N$ with \eqref{Q-minimal} as its $\mathbb{Q}$-minimal model. 

Deligne, Griffiths, Morgan and Sullivan prove in
\cite{DGMS} that the $\mathbb{K}$-minimal model, $\mathbb{K}=\mathbb{Q}$ or $\mathbb{R}$, of a compact K\"ahler manifold
is formal, i.e.  it is quasi-isomorphic to its cohomology. Hasegawa shows in~\cite{Has-PAMS} that the minimal model \eqref{Q-minimal} is formal 
if and only if all the structure constants $a^j_{ik}$ in~(\ref{st-ecus}) vanish, 
so a symplectic nilmanifold does not admit any K\"ahler metric unless it is a torus.
(See for instance~\cite{FHT-libro,TO-libro} for more results on homotopy theory and applications to symplectic geometry.)

\vskip.2cm

Bazzoni and Mu\~noz study in \cite{BM2012} the $\mathbb{K}$-homotopy types of nilmanifolds of low dimension. They prove that, up to dimension $5$, the number of rational homotopy types of nilmanifolds is finite.  
However, in six dimensions the following result holds:

\begin{theorem}\label{teorema-BM}\cite[Theorem 2]{BM2012}
There are infinitely many rational
homotopy types of $6$-dimensional nilmanifolds, but there are only $34$ real homotopy types of $6$-dimensional nilmanifolds.
\end{theorem}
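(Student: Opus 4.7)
The plan is to split the statement into two halves and reduce each to a Lie-algebra classification problem. By the discussion of Nomizu, Malcev and Hasegawa recalled above, the $\mathbb{K}$-minimal model of a nilmanifold $\nilm$ ($\mathbb{K}=\mathbb{Q}$ or $\mathbb{R}$) is the Chevalley-Eilenberg CDGA $(\bigwedge\frg^*,d)$, and an isomorphism of such CDGAs is the same (by dualising the structure equations \eqref{st-ecus}) as an isomorphism of the underlying nilpotent Lie algebras over $\mathbb{K}$. Hence counting real (resp.\ rational) homotopy types of $6$-dimensional nilmanifolds amounts to counting $\mathbb{R}$-isomorphism (resp.\ $\mathbb{Q}$-isomorphism) classes of $6$-dimensional nilpotent Lie algebras that admit a rational structure.

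For the finite half, I would invoke the classification of $6$-dimensional real nilpotent Lie algebras (Morozov, later refined by Magnin and de~Graaf), which produces a finite list of isomorphism classes. For each entry one then checks that there is a basis with rational structure constants, so that Malcev's theorem applies and the corresponding simply connected Lie group admits a lattice of maximal rank. Bookkeeping of the list gives the asserted count of $34$ real homotopy types arising from nilmanifolds.

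For the infinite half, I would produce an explicit one-parameter family $\{\frg_t\}_{t\in\mathbb{Q}^+}$ of rational $6$-dimensional nilpotent Lie algebras, all mutually $\mathbb{R}$-isomorphic to a single real Lie algebra from the finite list above. The construction is in the spirit of the family $N_\alpha$ used later in the paper: one chooses an algebra in which a single structure constant behaves as a scaling parameter whose value can be normalised to $1$ by a real change of basis, but only to an element of its class modulo $(\mathbb{Q}^\times)^m$ by a rational change of basis, for some positive integer $m$ dictated by the weights of the parameter under the allowed coordinate transformations. Since $\mathbb{Q}^\times/(\mathbb{Q}^\times)^m$ is infinite, one obtains infinitely many pairwise non-isomorphic rational models and hence infinitely many distinct rational homotopy types.

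The main obstacle is the non-isomorphism step in the infinite half: a priori an arbitrary rational automorphism of the Chevalley-Eilenberg algebra could mix the structure constants in a complicated way, so producing a genuinely invariant quantity that distinguishes the classes is non-trivial. The natural remedy is to identify a canonically defined tensor on $\frg_t$ (for instance, the induced map on $H^2$, or a bracket on a well-defined graded piece associated to the lower central series) whose determinant, restricted to the appropriate subspaces, depends polynomially on $t$ and transforms by an $m$-th power under a change of basis. Showing that this determinant is a well-defined element of $\mathbb{Q}^\times/(\mathbb{Q}^\times)^m$ that depends non-trivially on $t$ is the real work; once this is done, everything else is bookkeeping.
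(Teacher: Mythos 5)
First, a point of order: the paper does not prove this statement at all --- it is quoted verbatim as \cite[Theorem~2]{BM2012} and used as a black box, so there is no internal proof to compare yours against. What follows is therefore an assessment of your sketch on its own terms, measured against the actual argument in Bazzoni--Mu\~noz.

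Your overall reduction is the correct one, and it is the one used in \cite{BM2012}: since the $\mathbb{K}$-minimal model of $\nilm$ is the Chevalley--Eilenberg CDGA $(\bigwedge\frg^*,d)$, and an isomorphism of free CDGAs generated in degree $1$ commuting with $d$ is dual to a Lie algebra isomorphism, counting $\mathbb{K}$-homotopy types amounts to counting $\mathbb{K}$-isomorphism classes of $6$-dimensional nilpotent Lie algebras admitting a rational structure. The finite half then follows from the classification of $6$-dimensional real nilpotent Lie algebras (34 classes, each with a rational form), and the infinite half from a rational family that collapses over $\mathbb{R}$ but is separated over $\mathbb{Q}$ by an invariant valued in $\mathbb{Q}^\times/(\mathbb{Q}^\times)^2$. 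This is indeed how the theorem is proved.

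However, as written your text is a plan rather than a proof: both substantive steps are deferred. For the finite half, ``bookkeeping of the list'' is where the number $34$ actually comes from, and you would also need to check that \emph{every} real class on the list is realized by a nilmanifold (i.e.\ admits a basis with rational structure constants), not just that the count is finite. For the infinite half, the entire content is the construction of the invariant you describe only hypothetically (``the natural remedy is to identify a canonically defined tensor\ldots''); without exhibiting a concrete family --- e.g.\ a single structure constant $t$ in a $6$-dimensional algebra together with a proof that any $\mathbb{Q}$-isomorphism forces $t'/t\in(\mathbb{Q}^\times)^2$ --- the claim that infinitely many rational classes exist is not established. You have correctly identified where the difficulty lies, but the proposal does not close it; to be self-contained it would need either the explicit family and invariant from \cite{BM2012} or an equivalent construction, in the same spirit as the explicit computation the present paper carries out for its own family $\frg_\alpha$ in Proposition~\ref{prop-no-iso-NLA}.
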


As a direct consequence, there is only a finite number of real homotopy types of $6$-dimensional nilmanifolds
admitting an extra geometric structure of any kind. In the case of generalized complex structures (see Section~\ref{GCS} for definition), we will prove that, in contrast to the $6$-dimensional case, 
there are infinitely many real homotopy types of $8$-dimensional nilmanifolds admitting
generalized complex structures of every type $k$, for $0 \leq k \leq 4$.

To construct such nilmanifolds, let us take a positive rational number $\alpha$
and consider the connected, simply connected,
nilpotent Lie group $G_{\alpha}$ corresponding to the nilpotent Lie algebra $\frg_{\alpha}$ defined by
\begin{equation}\label{equations}
\begin{cases}
de^1=de^2 = de^3=de^4=0,\\[3pt]
de^5=e^{12},\\[3pt]
de^6=e^{15} + (1-\alpha)\, e^{24},\\[3pt]
de^7=-(1+\alpha)\,e^{14} - e^{23} + (1+\alpha)\,e^{25},\\[3pt]
de^8=e^{16}+e^{27} + e^{34} -2\,e^{45},
\end{cases}
\end{equation}
where $e^{ij}=e^i\wedge e^j$, being $\{e^i\}_{i=1}^8$ a basis for $\frg^{\ast}_{\alpha}$, and $\alpha\in {\mathbb{Q}}^+$.
It is clear from~\eqref{equations} that the Lie algebra~$\frg_{\alpha}$ is rational, hence by Mal'cev theorem \cite{Malcev},
there exists a lattice $\Gamma_{\alpha}$ of maximal rank in $G_{\alpha}$.
We denote by $N_{\alpha}=\Gamma_{\alpha}\backslash G_{\alpha}$ the
corresponding compact quotient.

Therefore, we have defined a family of nilmanifolds $N_{\alpha}$ of dimension $8$ depending on the rational parameter $\alpha\in {\mathbb{Q}}^+$. We will study the properties of $N_{\alpha}$, for $\alpha\in {\mathbb{Q}}^+$, in Sections~\ref{GCS} and~\ref{no-iso}. Here, we simply provide their Betti numbers.

A direct calculation using Nomizu's theorem \cite{Nomizu} allows to explicitly compute the de Rham cohomology groups
of any nilmanifold $N_{\alpha}$. In particular, for degrees $1\leq l\leq 3$, the $l$-th de Rham cohomology groups $H_{\text{dR}}^l(N_{\alpha})$ are 
\begin{equation*}
\begin{split}
H_{\text{dR}}^1(N_{\alpha})=\langle\, &[e^1],\,[e^2],\,[e^3],\,[e^4] \,\rangle, \\[5pt]
H_{\text{dR}}^2(N_{\alpha})=\langle\, &[e^{13}],\,[e^{14}],\,[e^{23}],\,[e^{24}],\,[e^{34}],\,
	[e^{16}-(1-\alpha)\,e^{45}],\\
	&[e^{17}+(1+\alpha)\,e^{26} + e^{35}],\,
	\big[(1+\alpha)e^{18}-e^{37}-(1+\alpha)(3+\alpha)\,e^{46} +(1+\alpha)e^{57}\big] 
	\,\rangle, \\[5pt]
H_{\text{dR}}^3(N_{\alpha})= \langle\, &[e^{127}],\,[e^{146}],\,[e^{256}],\,[e^{248}+e^{456}],\,
	[e^{128}+2\,e^{246}-e^{345}], [e^{136}+(1-\alpha)\,e^{345}], \\
&
[e^{137}+(1+\alpha)\,e^{236}],\,[e^{156}-(1-\alpha)\,e^{246}],\,
	[e^{138}+(3+\alpha)\,e^{346}-e^{357}],\\
&
[e^{147}+(1+\alpha)\,e^{246}+e^{345}],\,[(1+\alpha)e^{148}-e^{347}-(1+\alpha)e^{457}],\\
&
[e^{238}+e^{356}+(3-\alpha)\,(e^{148}-e^{457})] \,\rangle.
\end{split}
\end{equation*}
Let $b_l(N_{\alpha})$ denote the $l$-th Betti number of $N_{\alpha}$.
By 
duality we have: 
$$
b_0(N_{\alpha})=b_8(N_{\alpha})=1, \quad b_1(N_{\alpha})=b_7(N_{\alpha})=4, \quad
b_2(N_{\alpha})=b_6(N_{\alpha})=8, \quad b_3(N_{\alpha})=b_5(N_{\alpha})=12.
$$
One can finally compute the Betti number $b_4(N_{\alpha})$ taking into account that the Euler-Poincar\'e characteristic $\chi$ of
a nilmanifold always vanishes, namely,
$$
0=\chi(N_{\alpha}) = \sum_{l=0}^8 (-1)^l b_l(N_{\alpha})
= b_4(N_{\alpha}) + 2\big( b_0(N_{\alpha})-b_1(N_{\alpha})+b_2(N_{\alpha})-b_3(N_{\alpha}) \big),
$$
which implies $b_4(N_{\alpha})=14$. In particular, we observe that the Betti numbers of the nilmanifolds $N_{\alpha}$ do not depend on $\alpha$.

\section{Generalized complex structures on the nilmanifolds $N_{\alpha}$}\label{GCS}

\noindent
Generalized complex geometry, in the sense of Hitchin and Gualtieri~\cite{Hitchin,Gualtieri}, establishes a unitary framework for symplectic 
and complex geometries.
Let $M$ be a compact differentiable manifold of dimension~$2n$. 
Denote by $TM$ the tangent bundle and by $T^*M$ the cotangent bundle, and
consider the vector bundle $TM\oplus T^*M$ endowed with the natural symmetric pairing 
$$
\langle X+\xi \mid Y+\eta \rangle=\frac{1}{2} \big( \xi(Y)+\eta(X) \big).
$$
Recall that the \emph{Courant bracket} on the space ${\mathcal{C}}^{\infty}(TM\oplus T^*M)$ is given by
$$
[X+\xi , Y+\eta] = [X,Y] + {\mathcal L}_X\eta - {\mathcal L}_Y\xi -\frac{1}{2} d(\iota_X\eta - \iota_Y\xi),
$$
where ${\mathcal L}$ and $\iota$ respectively denote the Lie derivative and the interior product.
A \emph{generalized complex structure} on $M$ is an endomorphism 
${\mathcal J}\in  {\rm End}(TM\oplus T^*M)$ satisfying ${\mathcal J}^2=-1$ whose $i$-eigenbundle $L\subset (TM\oplus T^*M)\otimes {\mathbb{C}}$ is involutive with respect to the Courant bracket.

There is an action of $TM\oplus T^*M$ on $\bigwedge^{\bullet} T^*M$ 
given by
$$
(X+\xi)\cdot \rho = \iota_X\rho+\xi\wedge\rho.
$$
Now, for a generalized complex structure $\mathcal J$ with $i$-eigenbundle $L$, one can define the \emph{canonical line bundle} $K \subset \bigwedge^{\bullet} T^*M \otimes {\mathbb{C}}$ as
$$
L={\rm Ann}(K)=\{u \in (TM\oplus T^*M)\otimes {\mathbb{C}} \mid u\cdot K=0 \}.
$$
Any $\rho\in K$ is a non-degenerate pure form, i.e. 
it can be written as
$$
\rho = e^{B+i\,\omega}\, \Omega,
$$
where $B,\omega$ are real $2$-forms and $\Omega=\theta^1\wedge\cdots\wedge\theta^k$ is a complex decomposable $k$-form, such that 
$$
\omega^{n-k}\wedge\Omega\wedge\overline{\Omega} \not= 0.
$$ 
The number $k$ is called the \emph{type} of the generalized complex structure.
Moreover, any $\phi\in {\mathcal C}^{\infty}(K)$ is integrable, i.e. there exists $X+\xi \in {\mathcal{C}}^{\infty}(TM\oplus T^*M)$ satisfying 
$$
d \phi = (X+\xi) \cdot \phi.
$$
Notice that the converse also holds: if $K\subset \bigwedge^{\bullet} T^*M \otimes {\mathbb{C}}$ is a line bundle such that any $\rho\in K$ is a non-degenerate pure form and any $\phi\in {\mathcal C}^{\infty}(K)$ is integrable, then we have a generalized complex structure whose $i$-eigenbundle is $L={\rm Ann}(K)$.

In the case that $K$ is a trivial bundle admitting a nowhere vanishing closed section, the generalized complex structure is called \emph{generalized Calabi-Yau}.

Recall that if $J$ is a complex structure on $M$
then
$${\mathcal{J}}_J=\left(
\begin{array}{cc}
 -J & 0  \\
 0 & J^*  \\
\end{array}
\right)$$
is a generalized complex structure of type $n$, and if $\omega$ is a symplectic form on $M$ then
$${\mathcal{J}}_{\omega}=\left(
\begin{array}{cc}
 0 & -\omega^{-1}  \\
 \omega & 0  \\
\end{array}
\right)$$
is a generalized complex structure of type $0$.
Near a regular point (i.e. a point where the type is locally constant) a generalized complex structure is equivalent to a product of a complex and a symplectic structure 
\cite[Theorem 3.6]{Gualtieri}.

\vskip.12cm

In the case of a nilmanifold $N$, Cavalcanti and Gualtieri proved in \cite[Theorem 3.1]{CG-generalized} that any \emph{invariant} generalized complex structure on $N$ must be generalized Calabi-Yau. Hence, it is given by a (left-invariant) trivialization $\rho$ of the canonical bundle of the form (\ref{gcs-1}) satisfying the 
non-degeneracy condition~(\ref{gcs-2})
and the integrability condition (\ref{gcs-3}).

\vskip.2cm

Let us now prove Proposition~\ref{prop-1}, that is, 
each nilmanifold $N_{\alpha}$ has 
generalized complex structures of every type $k$, for $0 \leq k \leq 4$.
These structures will be explicitly described in terms of the global basis of invariant 1-forms 
$\{ e^i \}_{i=1}^8$ on $N_{\alpha}$ given in~\eqref{equations}. We begin providing a structure of type 4.

\vskip.4cm\noindent
$\bullet$ \textbf{Generalized complex structure of type 4 (complex structure).}
We define the following complex $1$-forms:
\begin{equation}\label{complex}
\begin{array}{cl}
\theta^1 \!\!\! &= \displaystyle{\frac 12 \left( \frac{1}{\sqrt{3+\alpha}}\,e^1 - e^2 \right)
		+ \frac{i}{2} \left( \frac{1}{\sqrt{3+\alpha}}\,e^1 + e^2 \right)} ,\\[10pt]
\theta^2 \!\!\! &= \displaystyle{-\alpha\,e^4 - \frac{i}{\sqrt{3+\alpha}}\left( \frac 12\,e^3 + e^5 \right) },\\[8pt]
\theta^3 \!\!\! &= \displaystyle{\frac{\alpha}{(1+\alpha)^2} \left( e^6 - \frac{1}{\sqrt{3+\alpha}}\,e^7 \right)
		+ \frac{i\,\alpha}{(1+\alpha)^2} \left( e^6 + \frac{1}{\sqrt{3+\alpha}}\,e^7 \right)} ,\\[10pt]
\theta^4 \!\!\! &= \displaystyle{\frac{1}{\sqrt{3+\alpha}} \left( e^5 + \frac{1-\alpha}{2\,(1+\alpha)^2}\, e^3 \right)
		- i\,\alpha \left( e^4 + \frac{2}{(1+\alpha)^2\sqrt{3+\alpha}}\,e^8 \right)} .
\end{array}
\end{equation}

From the equations (\ref{equations}) we get
\begin{equation}\label{FI-equations}
\left\{\begin{array}{ccl}
d\theta^1 \!\!&\!\!=\!\!&\!\! 0,\\[6pt]
d\theta^2 \!\!&\!\!=\!\!&\!\! \theta^{1}\wedge \overline{\theta^{1}},\\[5pt]
d\theta^3 \!\!&\!\!=\!\!&\!\! \displaystyle{\theta^{1}\wedge \theta^{4} + \theta^{1}\wedge \overline{\theta^{4}} + \frac{2\,\alpha}{(1+\alpha)^2}\,\theta^{2}\wedge \overline{\theta^{1}}
+ \frac{2\,i}{(1+\alpha)^2}\,\theta^{1}\wedge \overline{\theta^{2}}},\\[9pt]
d\theta^4 \!\!&\!\!=\!\!&\!\! \displaystyle{i\,\theta^{1}\wedge \overline{\theta^{1}} -\frac{2}{(1+\alpha)^2}\,\theta^{2}\wedge \overline{\theta^{2}} - i\,\theta^{1}\wedge \overline{\theta^{3}}+i\,\theta^{3}\wedge \overline{\theta^{1}}}.
\end{array}\right.
\end{equation}

Declaring the forms $\theta^i$ to be of bidegree $(1,0)$, we obtain an almost complex structure $J$ on the nilmanifold $N_{\alpha}$ for every $\alpha\in {\mathbb{Q}}^+$. It follows from (\ref{FI-equations}) that $d\theta^i$ has no $(0,2)$ component, so $J$ is integrable.
Hence,
$\rho =\Omega= \theta^1\wedge\theta^2\wedge\theta^3\wedge\theta^4$ 
is a generalized complex structure of type 4.

\begin{remark}\label{hol-Poisson}
{\rm
The complex nilmanifold $(N_{\alpha},J)$ has a holomorphic Poisson structure given by the holomorphic bivector $\beta=X_3 \wedge X_4$ of rank two, where $\{X_i\}$ is the dual basis of $\{\theta^i\}$ (see \cite[Theorem~5.1]{CG-generalized} for the existence of such a bivector on nilmanifolds).
It is worth observing that $(N_{\alpha},J)$ does not admit any
(invariant or not) holomorphic symplectic structure: since the center of the Lie algebra $\frg_\alpha$ has dimension 1,
the complex structure defined by (\ref{complex}) is \emph{strongly non-nilpotent} (see \cite{LUV-SnN} for properties on this kind of complex structures); by 
\cite{BFLM}, a strongly non-nilpotent complex structure on an $8$-dimensional nilmanifold 
cannot support any holomorphic symplectic form.
}
\end{remark}

\vskip.4cm\noindent
$\bullet$ \textbf{Generalized complex structure of type 3.}
Let us consider $\rho = e^{B+i\,\omega} \, \Omega$, with
$B=0$, $\omega =i\, \theta^4\wedge\overline{\theta}^{\,4}$ and
$\Omega=\theta^1\wedge\theta^2\wedge\theta^3$, 
where $\theta^1,\theta^2,\theta^3$ and $\theta^4$
are the complex 1-forms given in \eqref{complex}.
It is clear that
$$
\omega\wedge\Omega\wedge\overline{\Omega} = -i\,\theta^1\wedge\theta^2\wedge\theta^3\wedge\theta^4\wedge \overline{\theta^{1}}\wedge \overline{\theta^{2}}\wedge \overline{\theta^{3}}\wedge \overline{\theta^{4}} \not= 0,
$$
so the non-degeneracy condition~\eqref{gcs-2} is satisfied.
A direct calculation using \eqref{FI-equations} shows  $$
d(\theta^1\wedge\theta^2\wedge\theta^3)=0
$$ 
and
$$
d\omega\wedge\theta^1\wedge\theta^2\wedge\theta^3 = i\, d\theta^4\wedge\bar\theta^4\wedge\theta^1\wedge\theta^2\wedge\theta^3 - i\,\theta^4\wedge d\, \overline{\theta^{4}}\wedge\theta^1\wedge\theta^2\wedge\theta^3 =0,
$$
which implies that $d\rho=0$, i.e. the integrability condition \eqref{gcs-3} holds.

Therefore, the nilmanifolds $N_{\alpha}$ have generalized complex structures of type 3.

\vskip.4cm\noindent
$\bullet$ \textbf{Generalized complex structure of type 2.}
Recall that the action of a bivector $\beta$ is given by 
$\rho \mapsto e^{\iota_\beta}\rho$.
If $J$ is a complex structure and $\beta$ is a holomorphic Poisson structure of rank $l$, then one can deform $J$ into a generalized complex structure of type $n-l$ (see \cite{Gualtieri}).
In \cite[Theorem 5.1]{CG-generalized} it is proved that every invariant complex structure on a $2n$-dimensional nilmanifold
can be deformed, via such a $\beta$-field with $l=2$, to get an invariant generalized complex structure of type $n - 2$.
Therefore, our nilmanifolds have a generalized complex structure of type 2.

More concretely, in view of Remark~\ref{hol-Poisson}, from the generalized complex structure of type 4 defined by
$\rho = \theta^1\wedge\theta^2\wedge\theta^3\wedge\theta^4$ above, we get that
$$
\widetilde{\rho} = e^{\theta^3\wedge\theta^4} \, \Omega,
$$
with $\Omega=\theta^1\wedge\theta^2$,
is a generalized complex structure of type 2.
Indeed, $B=\frac{1}{2} (\theta^3\wedge\theta^4 + \overline{\theta^{3}}\wedge \overline{\theta^{4}})$ and
$\omega=-\frac{i}{2} (\theta^3\wedge\theta^4 - \overline{\theta^{3}}\wedge \overline{\theta^{4}})$. Thus,
$$
\omega^2\wedge\Omega\wedge\overline{\Omega} = \frac{1}{2}\,\theta^3\wedge\theta^4\wedge \overline{\theta^{3}}\wedge \overline{\theta^{4}}\wedge\theta^1\wedge\theta^2\wedge \overline{\theta^{1}}\wedge \overline{\theta^{2}} \not= 0,
$$
and $d(\theta^1\wedge\theta^2)=0$ and $d(B+i\, \omega) \wedge \theta^1\wedge\theta^2=
d(\theta^3\wedge\theta^4)\wedge\theta^1\wedge\theta^2=
d(\theta^1\wedge\theta^2\wedge\theta^3\wedge\theta^4)=0$ by the equations~\eqref{FI-equations}.

\vskip.5cm

For the definition of generalized complex structures of type 1 and type 0 we will deal with the space $Z^2(N_{\alpha})$ of invariant closed $2$-forms on the nilmanifold $N_{\alpha}$. The following lemma is straightforward:

\begin{lemma}\label{closed-2-forms}
Every invariant closed $2$-form~$\omega$ on the nilmanifold~$N_{\alpha}$ is given by
\begin{equation}\label{2-forma-cerrada}
\begin{array}{cl}
\omega = \!\!\!&  x_{12}\,e^{12} + x_{13}\,e^{13} + x_{14}\,e^{14} + x_{15}\,e^{15}
	+ x_{16}\,e^{16} + x_{17}\,e^{17} - (1+\alpha)\,x_{37}\,e^{18} \\[5pt]
	&  +\, x_{23}\,e^{23} + x_{24}\,e^{24} + x_{25}\,e^{25} + (1+\alpha)\,x_{17}\,e^{26}
	+ x_{27}\,e^{27} + x_{34}\,e^{34} + x_{17}\,e^{35} + x_{37}\,e^{37} \\[5pt]
	& -\, \big( (1-\alpha)\,x_{16} + (1+\alpha)\,x_{27} \big)\,e^{45}
	+ (1+\alpha)\,(3+\alpha)\,x_{37}\,e^{46} - (1+\alpha)\,x_{37}\,e^{57},
\end{array}
\end{equation}
where $x_{12},\ldots,x_{37} \in \mathbb{R}$. Hence, the space $Z^2(N_{\alpha})$ has dimension $12$.
\end{lemma}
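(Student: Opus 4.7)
The plan is to reduce the statement to a linear-algebra computation on $\Lambda^2\frg_\alpha^*$. Write a general invariant 2-form as
$$
\omega=\sum_{1\le i<j\le 8} x_{ij}\,e^{ij},
$$
a space of dimension $28$. Since $e^1,e^2,e^3,e^4$ are closed by \eqref{equations}, only the $22$ coefficients $x_{ij}$ with at least one index in $\{5,6,7,8\}$ contribute to $d\omega$. Apply the Leibniz rule $d(e^i\wedge e^j)=de^i\wedge e^j-e^i\wedge de^j$ and substitute \eqref{equations} to expand $d\omega$ in the basis $\{e^{ijk}\}_{i<j<k}$ of $\Lambda^3\frg_\alpha^*$.

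Setting each of the coefficients of the basis $3$-forms equal to zero yields a homogeneous linear system in the $28$ unknowns $x_{ij}$. Rather than writing the full $28\times 56$ matrix, organize the computation by reading off one constraint per monomial $e^{ijk}$ that appears on the right-hand side; for example, $d(x_{18}\,e^{18})=-x_{18}(e^{116}+e^{127}+e^{134}-2e^{145})$ contributes to the coefficients of $e^{127}$, $e^{134}$ and $e^{145}$, so these must be compensated by the $d$-images of the other $e^{ij}$'s. Carrying this out systematically forces ten of the coefficients to vanish, namely $x_{28}$, $x_{36}$, $x_{38}$, $x_{47}$, $x_{48}$, $x_{56}$, $x_{58}$, $x_{67}$, $x_{68}$, $x_{78}$, and imposes the six linear relations exhibited in \eqref{2-forma-cerrada}, expressing $x_{18}$, $x_{26}$, $x_{35}$, $x_{45}$, $x_{46}$, $x_{57}$ in terms of $x_{16},x_{17},x_{27},x_{37}$. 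The $12$ remaining parameters $x_{12},x_{13},x_{14},x_{15},x_{16},x_{17},x_{23},x_{24},x_{25},x_{27},x_{34},x_{37}$ are free, which gives both the explicit form of $\omega$ and the dimension count.

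The main obstacle is purely bookkeeping: tracking signs through the Leibniz rule and grouping all contributions to each monomial $e^{ijk}$. To shorten the verification, one may take the formula \eqref{2-forma-cerrada} as an ansatz, differentiate it once directly using \eqref{equations}, and check that $d\omega\equiv 0$ identically in the twelve parameters; this establishes that the displayed forms span a $12$-dimensional subspace of $Z^2(N_\alpha)$. A consistency check comes from the Betti numbers computed in Section~\ref{nilvariedades}: since $b_2(N_\alpha)=8$ and the invariant exact forms $de^5,de^6,de^7,de^8$ are visibly linearly independent, one expects $\dim Z^2(N_\alpha)=8+4=12$, in agreement with the claim.
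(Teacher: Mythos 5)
Your proposal is correct and coincides with what the paper intends: the paper states the lemma without proof (calling it ``straightforward''), and the intended argument is exactly your direct computation of $d\omega$ for a general $\omega=\sum_{i<j}x_{ij}e^{ij}$, solving the resulting homogeneous linear system (your lists of the ten vanishing coefficients and the six dependent ones check out against \eqref{2-forma-cerrada}). The sanity check $\dim Z^2=b_2+\dim\langle de^5,de^6,de^7,de^8\rangle=8+4=12$ is a nice addition consistent with the Betti numbers computed in Section~\ref{nilvariedades}.
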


\vskip.4cm\noindent
$\bullet$ \textbf{Generalized complex structure of type 1.}
We must find $\rho = e^{B+i\,\omega} \, \Omega$, with
$B,\omega$ real invariant 2-forms and $\Omega$ a complex $1$-form,
satisfying
$$
\omega^3\wedge\Omega\wedge\overline{\Omega}\not=0,
\quad\quad
d\Omega=0,
\quad\quad
(dB+i\,d\omega)\wedge \Omega=0.
$$

Since $\Omega$ is a complex $1$-form, it can be written as
$\Omega = \sum_{j=1}^4 z_j\, e^j$,
for some complex coefficients $z_1,\ldots,z_4 \in \mathbb{C}$.
Let us choose $\Omega = e^3 + i\, e^4$, which fulfills $d\Omega=0$ according to the structure equations~\eqref{equations}. 

We consider $B=0$ and let $\omega$ be any $2$-form given in~\eqref{2-forma-cerrada} with coefficients $x_{17},\,x_{37}\neq 0$.
A direct calculation shows that
$$
\omega^3\wedge\Omega\wedge\overline{\Omega} 
 = 12\,i\,(1+\alpha)^3\,x_{17}\,x_{37}^2\,e^{12345678} \neq 0.
$$
Since both $B$ and $\omega$ are closed, the condition $(dB+i\,d\omega)\wedge \Omega=0$ is 
trivially fulfilled, and $\rho=e^{i\,\omega} \, \Omega$ defines a generalized complex structure
of type $1$ on the nilmanifold $N_{\alpha}$.

\vskip.4cm\noindent
$\bullet$ \textbf{Generalized complex structure of type 0 (symplectic structure).}
The form $\omega$ in Lemma~\ref{closed-2-forms} determined by \eqref{2-forma-cerrada} satisfies
\begin{equation}\label{no-deg}
\begin{split}
\omega^4 = - 24\,(1+\alpha)^2\,x_{37}^2\,\bigg(
	&\Big( (1-\alpha)\,x_{16} - 2\,x_{27} + (1+\alpha)\,x_{34} \Big)x_{17}\\
	&+\Big( (1+\alpha)\,(3+\alpha)\,x_{23} + (3+\alpha)\,x_{25} \Big)x_{37}\,
	\bigg)\,e^{12345678}.
\end{split}
\end{equation}
It suffices to choose, for instance, $x_{17}=x_{23}=0$ and $x_{25}x_{37}\neq 0$ to get
a symplectic form $\omega$ on $N_{\alpha}$.

\medskip
This concludes the proof of Proposition~\ref{prop-1}.

\section{The nilmanifolds $N_{\alpha}$ and their minimal model}\label{no-iso}

\noindent
The goal of this section is to prove Proposition~\ref{prop-2}, i.e. the nilmanifolds $N_{\alpha}$ and $N_{\alpha'}$ have non-isomorphic $\mathbb{R}$-minimal models for $\alpha \not= \alpha'$.

As we recalled in Section~\ref{nilvariedades}, 
the $\mathbb{R}$-minimal model of the nilmanifold $N_{\alpha}$
is given by the Chevalley-Eilenberg complex $(\bigwedge\frg_{\alpha}^*,d)$ of its underlying Lie algebra $\frg_{\alpha}$. Consequently, to prove Proposition~\ref{prop-2} it suffices to show
that the real Lie algebras $\frg_{\alpha}$, $\alpha \in {\mathbb{Q}}^+$, define a family of pairwise non-isomorphic
nilpotent Lie algebras. Indeed, we will prove the following

\begin{proposition}\label{prop-no-iso-NLA}
If the nilpotent Lie algebras $\frg_{\alpha}$ and $\frg_{\alpha'}$ are isomorphic, 
then $\alpha = \alpha'$.
\end{proposition}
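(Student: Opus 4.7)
My plan is to work dually: a Lie algebra isomorphism $F\colon \frg_\alpha \to \frg_{\alpha'}$ induces a CDGA isomorphism $\phi\colon (\bigwedge\frg_\alpha^*,d) \to (\bigwedge\frg_{\alpha'}^*,d)$. Using the bases $\{e^i\}$ and $\{f^i\}$ from \eqref{equations} for $\frg_\alpha^*$ and $\frg_{\alpha'}^*$ respectively, I would write $\phi(e^i) = \sum_j A_{ij}\,f^j$. The intrinsic filtration $V_k = \{x \in \frg^*: dx \in \bigwedge^2 V_{k-1}\}$ (with $V_0 = 0$, $V_1 = \ker d$) is preserved by $\phi$ and, for both algebras, has dimensions $(4,5,7,8)$ with $V_k$ spanned by $\{e^1,\dots,e^{d_k}\}$; hence $A$ is block upper-triangular. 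The equation $d\phi(e^5) = \phi(e^1)\wedge\phi(e^2)$ additionally forces every $2\times 2$ minor of the $\{1,2\}\times\{3,4\}$ sub-block to vanish, giving $A_{13}=A_{14}=A_{23}=A_{24}=0$ and hence $\phi(\langle e^1, e^2\rangle) \subseteq \langle f^1, f^2\rangle$.

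Next I would expand $d\phi(e^k) = \phi(de^k)$ for $k = 6, 7$. Comparing the $f^{15}$ and $f^{25}$ coefficients produces
\begin{equation*}
A_{66} = A_{11}A_{55},\quad (1+\alpha')A_{67} = A_{12}A_{55},\quad A_{76} = (1+\alpha)A_{21}A_{55},\quad (1+\alpha')A_{77} = (1+\alpha)A_{22}A_{55},
\end{equation*}
and the $f^{13},\,f^{23},\,f^{14}$ coefficients of $\phi(de^6)$ give the auxiliary identities $A_{11}A_{53}+(1-\alpha)A_{21}A_{43}=0$, $A_{12}A_{53}+(1-\alpha)A_{22}A_{43}=-A_{67}$, and $A_{11}A_{54}+(1-\alpha)A_{21}A_{44}=-A_{67}(1+\alpha')$.

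The decisive input is $d\phi(e^8) = \phi\bigl(e^{16}+e^{27}+e^{34}-2e^{45}\bigr)$. Matching the $f^{35},\,f^{34},\,f^{45},\,f^{16},\,f^{17},\,f^{27}$ coefficients yields $A_{43}=0$, $A_{88}=A_{44}A_{55}=A_{33}A_{44}$, together with
\begin{equation*}
A_{88} = A_{55}\bigl(A_{11}^2+(1+\alpha)A_{21}^2\bigr),\quad (1+\alpha')A_{88} = A_{55}\bigl(A_{12}^2+(1+\alpha)A_{22}^2\bigr),\quad A_{11}A_{12}+(1+\alpha)A_{21}A_{22}=0.
\end{equation*}
Now $A_{43}=0$ simplifies the auxiliary identities to $A_{11}A_{53}=0$ and $A_{12}A_{53}=-A_{67}$; combining these with $(1+\alpha')A_{67}=A_{12}A_{55}$ and the last constraint above produces a clean dichotomy: either the upper-left $2\times 2$ block of $A$ is diagonal ($A_{12}=A_{21}=0$) or anti-diagonal ($A_{11}=A_{22}=0$).

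In the diagonal case, $A_{44}=A_{11}^2$ and $(1+\alpha')A_{44}=(1+\alpha)A_{22}^2$; the $f^{14}$ coefficient of $d\phi(e^7)=\phi(de^7)$ reduces to $A_{77}(1+\alpha')=(1+\alpha)A_{11}A_{44}=(1+\alpha)A_{11}^3$, while the identity from the previous step gives $A_{77}(1+\alpha')=(1+\alpha)A_{11}A_{22}^2$; equating yields $A_{11}^2=A_{22}^2$, which substituted into $(1+\alpha')A_{11}^2=(1+\alpha)A_{22}^2$ forces $\alpha=\alpha'$. In the anti-diagonal case, $A_{44}=(1+\alpha)A_{21}^2$ combined with $(1+\alpha')A_{44}=A_{12}^2$ gives $A_{12}^2=(1+\alpha')(1+\alpha)A_{21}^2$, whereas the $f^{14}$ identity of $\phi(de^6)$ independently yields $A_{12}^2=(1-\alpha)(1+\alpha)A_{21}^2$; equating and cancelling $1+\alpha\neq 0$ forces $\alpha'=-\alpha$, contradicting $\alpha,\alpha'\in{\mathbb{Q}}^+$. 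The main obstacle throughout is the combinatorial bookkeeping of matrix entries; what keeps the argument tractable is the filtration (which eliminates most entries a priori) together with the specific $(1\pm\alpha)$ factors in $de^6$ and $de^7$, which are precisely what make the key equations $\alpha$-sensitive.
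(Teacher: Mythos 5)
Your proposal is correct and takes essentially the same route as the paper's proof: dualize the isomorphism, use canonically defined filtrations to force the matrix into block upper-triangular form (you use the intrinsic filtration $V_k$ on $\frg_\alpha^*$ plus the $d\phi(e^5)$ minors; the paper uses the ascending central series and the derived algebra, yielding the same shape), and then extract the $\alpha$-sensitive polynomial identities from the coefficients of $d\phi(e^i)=\phi(de^i)$ for $i=6,7,8$, with only the case split organized differently (diagonal vs.\ anti-diagonal upper block, rather than the paper's $\lambda^1_1=0$ vs.\ $\lambda^1_1\neq 0$). One small inaccuracy: the $f^{34}$ coefficient of $d\phi(e^8)=\phi(de^8)$ actually gives $A_{88}=A_{44}\left(A_{33}+2A_{53}\right)$ rather than $A_{88}=A_{33}A_{44}$ (these agree only after one knows $A_{53}=0$), but since this identity is never used in your subsequent argument the proof is unaffected.
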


Remember that in eight dimensions, no classification of nilpotent Lie algebras is available. 
Indeed, nilpotent Lie algebras are classified only up to real dimension 7. 
More concretely, Gong classified in~\cite{Gong} the 7-dimensional nilpotent Lie algebras in 140 algebras 
together with 9 one-parameter families. One can check that our family $\{\frg_{\alpha}\}_{\alpha\in {\mathbb{Q}}^+}$ is not an extension of any of those 
9 families, i.e. the quotient of $\frg_{\alpha}$ by its center (which has dimension~$1$) does not belong to any of the 9 one-parameter families of Gong. 
Furthermore, the usual invariants for nilpotent Lie
algebras are the same for all the algebras in the family $\{\frg_{\alpha}\}_{\alpha\in {\mathbb{Q}}^+}$. 
For instance,
the dimensions of the terms in the ascending central series 
are~$(1,3,6,8)$,
whereas those of the descending central series are~$(4,3,1,0)$ (see Lemma~\ref{lema-F} for further details). 
Moreover, the dimensions of the Lie algebra cohomology groups 
$H^k(\frg_{\alpha})$ coincide for every $\alpha\in {\mathbb{Q}}^+$, as shown at the end of Section~\ref{nilvariedades}. 
For this reason, we will directly analyze the existence of an isomorphism between
any two of the nilpotent Lie algebras in our family $\{\frg_{\alpha}\}_{\alpha\in {\mathbb{Q}}^+}$. 

\vskip.2cm

The following technical lemma will be useful for our purpose.

\begin{lemma}\label{lema-general}
Let $f\colon\frg\longrightarrow\frg'$ be an isomorphism of the Lie algebras $\frg$ and $\frg'$.
Consider an ideal $\{0\}\not= \fra \subset \frg$, and let $\fra'=f(\fra) \subset \frg'$
be the corresponding ideal in $\frg'$. Let $\{e_{r+1},\ldots,e_m\}$, resp. $\{e'_{r+1},\ldots,e'_m\}$, be a basis of $\fra$,
resp. $\fra'$,
and complete it up to a basis $\{e_1,\ldots,e_r,e_{r+1},\ldots,e_m\}$ of $\frg$, resp. $\{e'_1,\ldots,e'_r,e'_{r+1},\ldots,e'_m\}$ of $\frg'$.
Denote the dual bases of $\frg^*$ and $\frg'^*$ respectively by $\{e^i\}_{i=1}^m$
and $\{e'^{\,i}\}_{i=1}^m$. Then, the dual map $f^*\colon\frg'^*\longrightarrow\frg^*$
satisfies
$$
f^*(e'^{\,i}) \wedge e^{1} \wedge \ldots \wedge e^{r}=0, \qquad \mbox{ for all }\ i=1,\ldots,r.
$$
\end{lemma}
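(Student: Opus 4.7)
The plan is to reduce the wedge-product condition to a statement about linear functionals and then exploit the hypothesis $f(\fra)=\fra'$ to conclude.

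First, I would observe that in $\bigwedge^{r+1}\frg^*$, the vanishing
\[
f^*(e'^{\,i}) \wedge e^{1} \wedge \cdots \wedge e^{r} = 0
\]
is equivalent to the statement that the $1$-form $f^*(e'^{\,i})$ lies in the subspace $\langle e^{1},\ldots,e^{r}\rangle \subset \frg^*$. Indeed, expanding $f^*(e'^{\,i})=\sum_{j=1}^{m} \lambda_j\, e^{j}$ and wedging against $e^{1}\wedge\cdots\wedge e^{r}$ kills all terms with $j\leq r$, leaving $\sum_{j>r} \lambda_j\, e^{j}\wedge e^{1}\wedge\cdots\wedge e^{r}$, which vanishes if and only if $\lambda_j=0$ for every $j>r$. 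Equivalently, one needs $f^*(e'^{\,i})$ to annihilate the complementary subspace $\fra = \langle e_{r+1},\ldots,e_m\rangle$.

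Next, I would verify this annihilation condition directly for $i=1,\ldots,r$. Take an arbitrary $v\in\fra$. By hypothesis $f(\fra)=\fra'$, hence $f(v)\in \fra'=\langle e'_{r+1},\ldots,e'_m\rangle$, so in the dual basis $\{e'^{\,j}\}$ one has $e'^{\,i}\bigl(f(v)\bigr)=0$ whenever $i\leq r$. By definition of the dual map,
\[
f^*(e'^{\,i})(v) \,=\, e'^{\,i}\bigl(f(v)\bigr) \,=\, 0,
\]
for all $v\in\fra$, which is exactly what the previous paragraph requires.

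Since the entire argument is a short linear-algebra exercise, there is no real obstacle: the only point worth being careful about is recognising that the wedge identity is merely a compact reformulation of \emph{containment in the annihilator of $\fra$}, after which the hypothesis $f(\fra)=\fra'$ closes the argument immediately. The lemma does not use the Lie bracket at all; it is a purely linear statement about dual bases adapted to a subspace and its image under an isomorphism.
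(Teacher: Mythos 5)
Your proof is correct. It rests on exactly the same fact as the paper's argument, namely that $f(\fra)=\fra'$ forces $f^*$ to carry the annihilator of $\fra'$ into the annihilator of $\fra$, which in the adapted dual bases means $f^*(e'^{\,i})\in\langle e^1,\ldots,e^r\rangle$ for $i\leq r$; your preliminary reduction of the wedge identity to this containment is also accurate. The only difference is in packaging: the paper passes to the quotient Lie algebras $\frg/\fra$ and $\frg'/\fra'$, uses the induced isomorphism $\tilde f$ and the injectivity of $\pi^*,\pi'^{\,*}$ in a commutative diagram of dual maps, and reads off that $f^*(e'^{\,i})$ lies in the image of $\pi^*$, i.e.\ in $\langle e^1,\ldots,e^r\rangle$. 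You instead evaluate $f^*(e'^{\,i})$ directly on vectors $v\in\fra$ and use $e'^{\,i}(f(v))=0$. Your version is more elementary and makes transparent your (correct) closing observation that the Lie bracket and the ideal property are never used -- only that $\fra$ is a subspace with $f(\fra)=\fra'$; the paper's quotient formulation has the cosmetic advantage of staying within the category of Lie algebra morphisms, which matches how the lemma is subsequently applied to terms of the central series and the derived algebra, but it proves nothing more.
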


\begin{proof}
Let $\pi\colon\frg\longrightarrow\frg/\fra$ and $\pi'\colon\frg'\longrightarrow\frg'/\fra'$ be the
natural projections, and $\tilde{f}\colon\frg/\fra\longrightarrow\frg'/\fra'$ the Lie algebra isomorphism induced by $f$
on the quotients. Taking the corresponding dual maps, we have the following commutative diagrams:
\[
\xymatrix{
\frg \ar[d]_{\pi} \ar[r]^{f} & \frg' \ar[d]^{\pi'} \\
\frg/\fra \ar[r]^{\tilde{f}} & \frg'/\fra',
}
\qquad
\qquad\qquad
\xymatrix{
\frg'^{\,*}  \ar[r]^{f^*} & \frg^*  \\
(\frg'/\fra')^* \ar[u]^{\pi'^{\,*}} \ar[r]^{\tilde{f}^*} & (\frg/\fra)^* \ar[u]_{\pi^*}.
}
\]
Taking the basis $\{e_1,\ldots,e_r,e_{r+1},\ldots,e_m\}$ of $\frg$, we have that $\{\tilde{e}_1,\ldots,\tilde{e}_r\}$
is a basis of $\frg/\fra$. Let
$\{\tilde{e}^1,\ldots,\tilde{e}^r\}$ be its dual basis for $(\frg/\fra)^*$.
Using a similar procedure, we find a basis $\{\tilde{e}'^{\,1},\ldots,\tilde{e}'^{\,r}\}$ for $(\frg'/\fra')^*$.
Since the maps $\pi^*$ and $\pi'^{\,*}$ are injective, and the diagram is commutative, we get
$$
f^*(e'^{\,i})=f^*\big(\pi'^{\,*}(\tilde{e}'^{\,i})\big)=\pi^*\big(\tilde{f}^*(\tilde{e}'^{\,i})\big) \in \langle e^1,\ldots, e^r\rangle,
$$
for any $1\leq i \leq r$.
\end{proof}

Applying the previous result to our particular case, we get:

\begin{lemma}\label{lema-F}
Consider $\frg_{\alpha}$ and $\frg_{\alpha'}$ for $\alpha,\alpha' \in \mathbb{Q}^+$.
If $f\colon\frg_{\alpha}\longrightarrow\frg_{\alpha'}$ is an isomorphism of Lie algebras, then in terms of their
respective bases $\{e^i\}_{i=1}^8$ and $\{e'^{\,i}\}_{i=1}^8$ given in~\eqref{equations},
the dual map $f^*\colon\frg_{\alpha'}^*\longrightarrow\frg_{\alpha}^*$ satisfies
\begin{equation}\label{cambio-general-reducido}
\begin{split}
&f^*(e'^{\,i}) \wedge e^{12} =0, \text{ \ for } i=1,2,\\
&f^*(e'^{\,i}) \wedge e^{1234} =0, \text{ \ for } i=3,4,
\end{split}
\qquad\quad
\begin{split}
&f^*(e'^{\,5}) \wedge e^{12345} =0, \\
&f^*(e'^{\,i}) \wedge e^{1234567} =0,  \text{ for } i=6,7.
\end{split}
\end{equation}
\end{lemma}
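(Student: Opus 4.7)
My plan is to apply Lemma~\ref{lema-general} four times, once for each of the four statements in~\eqref{cambio-general-reducido}, using in each case a \emph{characteristic} ideal of $\frg_\alpha$ (one that is preserved by any Lie algebra isomorphism). The key observation is that the defining equations~\eqref{equations} for $\frg_\alpha$ and $\frg_{\alpha'}$ have the same shape, so the characteristic ideals we exhibit in $\frg_\alpha$ (in terms of the basis $\{e_i\}$) will coincide, after transport by $f$, with the analogous ideals in $\frg_{\alpha'}$ expressed in the basis $\{e'_i\}$. That is exactly the hypothesis $\fra' = f(\fra)$ of Lemma~\ref{lema-general}, together with the requirement that $\fra'$ be spanned by the matching tail of the dual basis.

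First I would identify the relevant characteristic ideals by inspecting~\eqref{equations}. The center is $Z_1=\langle e_8\rangle$ (since $e_8$ does not appear in any $de^j$), and the successive terms of the ascending central series come out to be $Z_2=\langle e_6,e_7,e_8\rangle$ and $Z_3=\langle e_3,e_4,e_5,e_6,e_7,e_8\rangle$, matching the dimensions $(1,3,6,8)$ recorded in the paper. In addition, the derived ideal is $[\frg_\alpha,\frg_\alpha]=\langle e_5,e_6,e_7,e_8\rangle$, since the annihilator in $\frg_\alpha^*$ consists exactly of the closed $1$-forms $e^1,e^2,e^3,e^4$. The same formulas hold verbatim in $\frg_{\alpha'}$, so $f$ sends each of these ideals to its primed counterpart.

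Now I apply Lemma~\ref{lema-general} successively:
\begin{itemize}
\item With $\fra=Z_3$ (so $r=2$, complement $\langle e_1,e_2\rangle$) one obtains $f^*(e'^{\,i})\in\langle e^1,e^2\rangle$ for $i=1,2$, giving the first line of~\eqref{cambio-general-reducido}.
\item With $\fra=[\frg_\alpha,\frg_\alpha]$ (so $r=4$, complement $\langle e_1,\dots,e_4\rangle$) one obtains $f^*(e'^{\,i})\in\langle e^1,\dots,e^4\rangle$ for $i=1,\dots,4$, in particular for $i=3,4$.
\item With $\fra=Z_2$ (so $r=5$) one obtains $f^*(e'^{\,i})\in\langle e^1,\dots,e^5\rangle$ for $i=1,\dots,5$, in particular for $i=5$.
\item With $\fra=Z_1$ (so $r=7$) one obtains $f^*(e'^{\,i})\in\langle e^1,\dots,e^7\rangle$ for $i=1,\dots,7$, in particular for $i=6,7$.
\end{itemize}
Each containment is equivalent to the corresponding wedge relation in~\eqref{cambio-general-reducido}, which completes the proof.

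There is no serious obstacle: the only thing to check is that the ideals listed above really are the center, second center, third center and commutator ideal of $\frg_\alpha$, and this is a routine inspection of~\eqref{equations}. The point of the lemma is precisely to package the constraints coming from these four characteristic ideals into a convenient block-triangular form for $f^*$ that will be exploited in the subsequent argument.
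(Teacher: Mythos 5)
Your proof is correct and follows essentially the same route as the paper: compute the ascending central series $\langle e_8\rangle\subset\langle e_6,e_7,e_8\rangle\subset\langle e_3,\dots,e_8\rangle$ and the derived ideal $\langle e_5,e_6,e_7,e_8\rangle$, note these characteristic ideals are mapped to their primed counterparts by $f$, and apply Lemma~\ref{lema-general} to each. All four ideal computations check out against~\eqref{equations}, so nothing is missing.
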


\begin{proof}
Recall that the ascending central series of a Lie algebra $\frg$ is defined by $\{\frg_k\}_{k}$, where
$\frg_0=\{0\}$ and
$$
\frg_k=\{X\in\frg \mid [X,\frg]\subseteq \frg_{k-1}\}, \text{ for } k\geq 1.
$$
Observe that $\frg_1=Z(\frg)$ is the center of~$\frg$.

Let $\{e_i\}_{i=1}^8$ and $\{e'_i\}_{i=1}^8$ be the bases for $\frg_{\alpha}$ and $\frg_{\alpha'}$ dual to
$\{e^i\}_{i=1}^8$ and $\{e'^{\,i}\}_{i=1}^8$, respectively.
In terms of these bases, the ascending central series of $\frg_{\alpha}$ and $\frg_{\alpha'}$ are
$$
(\frg_{\alpha})_1=\langle e_8 \rangle \subset
(\frg_{\alpha})_2=\langle e_6,e_7,e_8 \rangle \subset
(\frg_{\alpha})_3=\langle e_3,e_4,e_5,e_6,e_7,e_8 \rangle,
$$
and
$$
(\frg_{\alpha'})_1=\langle e'_8 \rangle \subset
(\frg_{\alpha'})_2=\langle e'_6,e'_7,e'_8 \rangle \subset
(\frg_{\alpha'})_3=\langle e'_3,e'_4,e'_5,e'_6,e'_7,e'_8 \rangle.
$$
Since $f\big((\frg_{\alpha})_k\big)=(\frg_{\alpha'})_k$ for any Lie algebra isomorphism $f\colon\frg_{\alpha}\longrightarrow\frg_{\alpha'}$,
applying Lemma~\ref{lema-general} to the ideals $\mathfrak{a}=(\frg_{\alpha})_k$ for $k=1,2,3$
one gets~\eqref{cambio-general-reducido} for $i=1,2,5,6$ and $7$.

Moreover, the derived algebras of $\frg_{\alpha}$ and $\frg_{\alpha'}$ are, respectively,
$$
[\frg_{\alpha},\frg_{\alpha}]=\langle e_5,e_6,e_7,e_8 \rangle,
\qquad
[\frg_{\alpha'},\frg_{\alpha'}]=\langle e'_5,e'_6,e'_7,e'_8 \rangle.
$$
Using again Lemma~\ref{lema-general} with $\mathfrak{a}=[\frg_{\alpha},\frg_{\alpha}]$,
we obtain~\eqref{cambio-general-reducido} for $i=3,4$.
\end{proof}

\vskip.4cm

We are now in the conditions to prove Proposition~\ref{prop-no-iso-NLA}.

\bigskip
\noindent{\textbf{Proof of Proposition~\ref{prop-no-iso-NLA}.}}
Given any homomorphism of Lie algebras $f \colon \frg_{\alpha} \longrightarrow \frg_{\alpha'}$,
its dual map $f^* \colon \frg_{\alpha'}^* \longrightarrow \frg_{\alpha}^*$ naturally extends to a map
$F \colon \bigwedge^*\frg_{\alpha'}^* \longrightarrow \bigwedge^*\frg_{\alpha}^*$ that commutes with the differentials,
i.e. $F\circ d=d\circ F$.
Hence, in terms of the bases $\{e^i\}_{i=1}^8$ for~$\frg_{\alpha}^*$ and $\{e'^{\,i}\}_{i=1}^8$ 
for $\frg_{\alpha'}^*$ 
satisfying the equations~\eqref{equations}
with respective parameters~$\alpha$ and~$\alpha'$, any Lie algebra isomorphism is defined by
\begin{equation}\label{cambio-base}
F(e'^{\,i}) =\sum_{j=1}^8 \lambda_j^i\, e^j, \quad i=1,\ldots,8,
\end{equation}
satisfying conditions
\begin{equation}\label{condiciones}
d\big(F(e'^{\,i})\big)-F(de'^{\,i})  =0, \ \text{ for each }1\leq i\leq 8,
\end{equation}
where the matrix $\Lambda=(\lambda^i_j)_{i,j=1,\ldots,8}$ belongs to ${\rm GL}(8,\mathbb{R})$.

We first note that the preceding lemma allows us to simplify the $8\times 8$ matrix $\Lambda$.
In fact, from Lemma~\ref{lema-F} one has that
$\lambda^i_j =0$ for $1\leq i \leq 2$ and $3\leq j \leq 8$,
for $3\leq i \leq 5$ and $6\leq j \leq 8$, and also
$\lambda^3_5 = \lambda^4_5 = \lambda^6_8 =\lambda^7_8 =0$.
Since $\Lambda$ belongs to ${\rm GL}(8,\mathbb{R})$, the previous conditions imply that 
$\lambda^5_5\neq 0$ and $\lambda^8_8\neq 0$.

Note also that~\eqref{condiciones} is   
trivially fulfilled for $1\leq i\leq 4$. 
Hence, it suffices to focus on $5\leq i\leq 8$.
We will denote by $\big[d\big(F(e'^{\,i})\big)-F(de'^{\,i})\big]_{jr}$ the coefficient
for $e^{jr}$ in the expression of the 2-form $d\big(F(e'^{\,i})\big)-F(de'^{\,i})$.

By a direct calculation we have 
$$
0=\big[d\big(F(e'^{\,8})\big)-F(de'^{\,8})\big]_{35}=2\,\lambda^4_3\,\lambda^5_5.
$$
Since $\lambda^5_5\neq 0$, we conclude that $\lambda^4_3=0$.
Now observe that the following expressions must annihilate:
\begin{equation*}
\begin{aligned}
\big[d\big(F(e'^{\,6})\big)-F(de'^{\,6})\big]_{23} &=
	- (\lambda^1_2 \lambda^5_3 + \lambda^6_7),\\
\big[d\big(F(e'^{\,7})\big)-F(de'^{\,7})\big]_{23} &=
	\ \lambda^2_2\big(\lambda^3_3  -(1+\alpha')  \lambda^5_3\big) - \lambda^7_7,\\
\big[d\big(F(e'^{\,6})\big)-F(de'^{\,6})\big]_{25} &=
	-\lambda^1_2 \lambda^5_5 + (1+\alpha) \lambda^6_7, \\
\big[d\big(F(e'^{\,7})\big)-F(de'^{\,7})\big]_{25} &=
	-(1+\alpha')  \lambda^2_2\lambda^5_5   +(1+\alpha)  \lambda^7_7.
\end{aligned}
\end{equation*}
Solving $\lambda^6_7$ and $\lambda^7_7$ from the first two equations and
replacing their values in the last ones, we get:
\begin{equation}\label{ecuaciones1}
\lambda^1_2\big(\lambda^5_5 +(1+\alpha) \,\lambda^5_3\big) = 0,
\qquad
\lambda^2_2\left(\lambda^3_3 -(1+\alpha')\,\lambda^5_3 - \frac{1+\alpha'}{1+\alpha}\,\lambda^5_5\right) = 0.
\end{equation}
Moreover, also the following terms must vanish:
$$
\big[d\big(F(e'^{\,8})\big)-F(de'^{\,8})\big]_{34}
 = -\lambda^4_4\,(\lambda^3_3  +2  \lambda^5_3) + \lambda^8_8,
\qquad
\big[d\big(F(e'^{\,8})\big)-F(de'^{\,8})\big]_{45}
 = 2\,(\lambda^4_4 \lambda^5_5 -\lambda^8_8).
$$
From the second one, we have $\lambda^8_8=\lambda^4_4 \lambda^5_5$. 
Since $\lambda^8_8 \neq 0$, in particular also $\lambda^4_4\neq 0$.
Using the first expression above, we can then solve
\begin{equation}\label{ecuaciones2}
\lambda^3_3=\lambda^5_5 - 2\,\lambda^5_3.
\end{equation}
In addition, observe that
$\big[d\big(F(e'^{\,5})\big)-F(de^{5})\big]_{12}=\big[d\big(F(e'^{\,7})\big)-F(de^{7})\big]_{13}=0$ lead to
\begin{equation}\label{ecuaciones3}
\lambda^5_5=\lambda^1_1\lambda^2_2 - \lambda^1_2\lambda^2_1,
\qquad
\lambda^2_1\big(\lambda^3_3 -(1+\alpha')  \lambda^5_3 \big)=0.
\end{equation}

We now check that the vanishing of the coefficient $\lambda^1_1$ leads to a contradiction.
Indeed, in such case, the first expression in \eqref{ecuaciones3}
becomes $\lambda^5_5=-\lambda^1_2\lambda^2_1\neq 0$, and
from \eqref{ecuaciones1} we then have $\lambda^5_5 =-(1+\alpha) \,\lambda^5_3$,
which plugged into \eqref{ecuaciones2} gives $\lambda^3_3=-(3+\alpha)\lambda^5_3$.
Replacing this value in the second equation of \eqref{ecuaciones3}, the condition
$\lambda^5_3(\alpha +\alpha' + 4) = 0$ arises.
Since $\alpha$ and $\alpha'$ are greater than zero, we are forced to consider $\lambda^5_3 = 0$.
However, this leads to $\lambda^5_5 = 0$,
which is a contradiction.
Hence, we necessarily have that $\lambda^1_1$ is nonzero.

Since $\lambda^1_1\neq 0$, the condition
$$
0=\big[d\big(F(e'^{\,6})\big)-F(de'^{\,6})\big]_{13} = - \lambda^1_1 \lambda^5_3
$$
implies $\lambda^5_3 = 0$. Replacing this value in \eqref{ecuaciones1}, \eqref{ecuaciones2}, and
\eqref{ecuaciones3}, we obtain:
$$
\lambda^1_2\,\lambda^5_5=0, \qquad
\lambda^2_1\,\lambda^5_5=0, \qquad
\lambda^2_2\,\lambda^5_5\left( 1- \frac{1+\alpha'}{1+\alpha}\right)=0,\qquad
\lambda^3_3=\lambda^5_5=\lambda^1_1\lambda^2_2 - \lambda^1_2\lambda^2_1.
$$
As $\lambda^5_5\neq 0$, one immediately has $\lambda^2_1 = \lambda^1_2 = 0$ and
$\lambda^5_5=\lambda^1_1\lambda^2_2$. Consequently $\lambda^2_2\neq 0$, which
allows us to conclude $1+\alpha  = 1+\alpha'$, and
thus $\alpha = \alpha'$.
This completes the proof of the proposition.
\hfill$\square$

\begin{remark}\label{C-minimal-model}
{\rm
In addition to the notions of rational and real homotopy types, there is also the notion of \emph{complex} homotopy type \cite{DGMS}. Two manifolds $X$ and $Y$ have the same $\mathbb{C}$-homotopy type if and only if their $\mathbb{C}$-minimal models $(\bigwedge V_X \otimes_{\mathbb{Q}} \mathbb{C}, d)$ and $(\bigwedge V_Y \otimes_{\mathbb{Q}} \mathbb{C}, d)$  are isomorphic. Here $(\bigwedge V_X, d)$ and $(\bigwedge V_Y, d)$ are the rational minimal models of $X$ and $Y$, respectively. Recall that when the field $\mathbb{K}$ has ${\rm char}\, (\mathbb{K})=0$, the $\mathbb{K}$-minimal model is unique up to isomorphism. 
Clearly, if $X$ and $Y$ have different complex homotopy type, then $X$ and $Y$ have different real (hence, also rational) homotopy type.

For nilmanifolds, it is proved in \cite[Theorem 2]{BM2012}
that there are exactly $30$ complex homotopy types of $6$-dimensional nilmanifolds. It is worth remarking that if $\alpha \not= \alpha'$, then our nilmanifolds $N_{\alpha}$ and~$N_{\alpha'}$ have different $\mathbb{C}$-minimal model. Indeed, it can be checked that the proof of Proposition~\ref{prop-2} above directly
extends
to the case when the matrix $\Lambda=(\lambda^i_j)_{i,j=1,\ldots,8}$ defined in~(\ref{cambio-base}) belongs to ${\rm GL}(8,\mathbb{C})$.
In conclusion, our main result in Theorem~\ref{infinite-GCS} extends to the complex case, i.e. 
there are infinitely many \emph{complex} homotopy types of $8$-dimensional nilmanifolds admitting
a generalized complex structure of every type $k$, for $0 \leq k \leq 4$. 
Now, Corollary~\ref{cor-infinite-GCS} is a consequence of the fact that the product
nilmanifolds $N_{\alpha}\times {\mathbb T}^{2m}$ do not admit any K\"ahler metric. 
}
\end{remark}

\medskip

\noindent{\textbf{Acknowledgments}}.
This work has been partially supported by the projects MTM2017-85649-P (AEI/FEDER, UE), and
E22-17R ``\'Algebra y Geometr\'{\i}a'' (Gobierno de Arag\'on/FEDER).

\smallskip

\end{document}